\newtheorem{definition}{Definition}[section]
\newtheorem{theorem}[definition]{Theorem}
\newtheorem{lemma}[definition]{Lemma}
\newtheorem{corollary}[definition]{Corollary}
\newtheorem{remark}{Remark}[section]
\newcommand{\re}{\mathbb R} 
\DeclareMathOperator{\diver}{div}
\DeclareMathOperator{\rot}{rot}
\title[2D elliptic equations]{
Asymptotic behavior of solutions to 
elliptic and parabolic equations with unbounded coefficients of the second order
in unbounded domains
} 
\dedicatory{Dedicated to Professor  Matthias Hieber on the occasion of his 60th birthday}
\author{Hideo Kozono, Yutaka Terasawa and Yuta Wakasugi}
\address[H. Kozono]{Department of Mathematics, Faculty of Science and Engineering,
Waseda University, Tokyo 169--8555, Japan, 
Research Alliance Center of Mathematical Sciences, Tohoku University, 
Sendai 980-8578, Japan}
\email[H. Kozono]{kozono@waseda.jp, hideo.kozono.c7@tohoku.ac.jp}
\address[Y. Terasawa]{Graduate School of Mathematics, Nagoya University,
Furocho Chikusaku Nagoya 464-8602, Japan}
\email[Y. Terasawa]{yutaka@math.nagoya-u.ac.jp}
\address[Y. Wakasugi]{
Graduate School of Engineering,
Hiroshima University,
Higashi-Hiroshima, 739-8527, Japan}
\email[Y. Wakasugi]{wakasugi@hiroshima-u.ac.jp}
\begin{document}
\begin{abstract}
We study an asymptotic behavior of solutions
to elliptic equations of the second order in a two dimensional exterior domain.
Under the assumption that the solution belongs to $L^q$ with $q \in [2,\infty)$,
we prove a pointwise asymptotic estimate of the solution at the spatial infinity
in terms of the behavior of the coefficients.
As a corollary, we obtain the Liouville-type theorem 
in the case when the coefficients may grow at the spacial infinity. 
We also study a corresponding parabolic problem 
in the $n$-dimensional whole space
and discuss the energy identity
for solutions in $L^q$.
As a corollary we show also the Liouville-type theorem for both  forward and ancient solutions.
\end{abstract}
\keywords{elliptic and parabolic equations of second order; asymptotic behavior}

\maketitle
\section{Introduction}
\footnote[0]{2010 Mathematics Subject Classification. 35J15, 35K10, 35B53 }

We consider an elliptic differential equation of the second order with the 
divergence form such as 
\begin{align}
\label{ell}
	- \sum_{i,j=1}^2 \partial_i (a_{ij}(x) \partial_j u) + \mathbf{b}(x)\cdot \nabla u + c(x) u = 0,
	\quad x \in \Omega
\end{align}
where
$\Omega$
is the whole plane $\mathbb{R}^2$
or an exterior domain
$\Omega = \overline{B_{r_0}(0)}^c = \{ x \in \mathbb{R}^2 ; |x| = r \ge r_0 \}$.
Our aim is to clarify how the asymptotic behavior of the coefficients
$a_{ij}(x)$ and $\mathbf{b}(x)$,
in particular,
their {\it growth} conditions at infinity, has an 
influence to the asymptotic behavior of solution $u(x)$ of (\ref{ell}) as $|x|\to \infty$.
Our study is motivated by investigation of the asymptotic behavior of solutions to
the stationary Navier-Stokes equations
\begin{align}%
\label{ns}
	\left\{ \begin{array}{l}
		-\Delta v + (v \cdot \nabla)v + \nabla p = 0,\\
		\diver v = 0,
	\end{array} \right.
	\quad x \in \Omega.
\end{align}%
By the pioneer work of Leray \cite{Le},
the existence of solutions $(v,p)$ of (\ref{ns}) with the finite Dirichlet integral
\begin{align}%
\label{d_sol}
	\int_{\Omega} |\nabla v (x) |^2 \,dx < \infty
\end{align}%
had been proved.
Then, Gilbarg-Weinberger \cite{GiWe78},
Amick \cite{Am88},
and Korobkov-Pileckas-Russo \cite{KoPiRu17, KoPiRu18}
studied the asymptotic behavior of solutions satisfying \eqref{d_sol}.
They proved that the solution $v$ to \eqref{ns}--\eqref{d_sol}
converges to a constant vector $v_{\infty}$ uniformly at infinity, i.e., 
\begin{align*}%
	\lim_{r\to \infty} \sup_{\theta \in [0,2\pi]} |v(r,\theta) - v_{\infty}| = 0,
\end{align*}%
where $(r,\theta)$ denotes the polar coordinates.
A basic approach to the analysis of \eqref{ns} is to handle
the vorticity
$\omega = \rot v = \partial_{x_1} v_2 - \partial_{x_2} v_1$
which satisfies the equation
\begin{align}%
\label{vor_eq}
	-\Delta \omega + v\cdot \nabla \omega = 0.
\end{align}%
In our previous result \cite{KoTeWapr},
we studied the asymptotic behavior of solutions $\omega$ to \eqref{vor_eq}
with the finite {\it generalized} Dirichlet integral
\begin{align}%
\label{g_diri}
	\int_{\Omega} |\nabla v(x)|^q \,dx < \infty
\end{align}%
for some $q \in (2,\infty)$.
Note that \eqref{g_diri} implies $\omega \in L^q(\Omega)$.
Indeed, it is proved in \cite{KoTeWapr} that the vorticity $\omega$ and the gradient $\nabla v$ 
of the velocity behave like  
\begin{align*}%
	|\omega (r,\theta)| = o(r^{-(\frac{1}{q} + \frac{1}{q^2})}),\quad 
	|\nabla v(r,\theta)| = o(r^{-(\frac{1}{q} + \frac{1}{q^2})}\log r) 
	\quad
	\mbox{as $r \to \infty$}, 
\end{align*}%
respectively.
The crucial point is to regard the velocity $v(x)$ as a given coefficient in the equation \eqref{vor_eq}
and to analyze how the asymptotic behavior of $v(x)$ does affect 
that of $\omega$.
In this respect, the problem \eqref{ell} may be regarded as a generalization of \eqref{vor_eq}.

In this paper, we generalize the result of \cite{KoTeWapr} to
the elliptic equation \eqref{ell},
and prove the asymptotic behavior of solutions 
at the spatial infinity under the assumption that 
$u \in L^q(\Omega)$ with some $q \in [2,\infty)$.
In particular, we are interested in the case when 
the coefficients $a_{ij}, b$ may grow at spatial infinity.
\par
\vspace{2mm}
Our precise assumptions and results are the following.
\par
\vspace{2mm}
\noindent
{\bf Assumptions on the coefficients for the elliptic problem \eqref{ell}}
\begin{itemize}
\item[(e-i)]
$a_{ij} \in C^1(\Omega)$, $a_{ij}= a_{ji}$ for $i, j =1, 2$  and 
\begin{align*}
	\sum_{i,j=1}^2 a_{ij}(x) \xi_i \xi_j \ge \lambda |\xi|^2 \quad (\xi \in \mathbb{R}^2, x\in \Omega)
\end{align*}
with some $\lambda>0$. The growth condition
\begin{align*}%
	| a_{ij}(x) | = O(|x|^{\alpha}), \quad
	|\partial_i a_{ij}(x) | = O(|x|^{\alpha-1}) \quad (|x|\to \infty)
\end{align*}%
is satisfied for $i,j=1,2$
with some $\alpha \in [0,2]$.
\item[(e-ii)]
$\mathbf{b}(x) = (b_1(x), b_2(x)) \in C^1(\Omega)$
satisfies
\begin{align*}
	\mathbf{b}(x) = O(|x|^{\beta}) \quad (|x| \to \infty)
\end{align*}
with some $\beta \le 1$.
\item[(e-iii)]
$c(x)$ is measurable and {\it nonnegative}.
\item[(e-iv)]
Either following condition (1) or (2) holds:
\begin{itemize}
\item[(1)]
$\diver \mathbf{b}(x) \le 2 c(x)$, 
\item[(2)]
$|\diver \mathbf{b}(x)| = O(|x|^{\beta-1}) \quad (|x| \to \infty)$.
\end{itemize}
\end{itemize} 
\par
\bigskip
Our first result on the elliptic equation (\ref{ell}) now reads
\begin{theorem}\label{thm_asym}
Let the assumptions {\rm (e-i)--(e-iv)} above hold. 
Suppose that that $u \in C^2(\Omega)$ satisfies \eqref{ell} in $\Omega$ and 
that $u \in L^q (\Omega)$ with some $q \in [2,\infty)$.
Then, we have that 
\begin{equation}\label{decay}
	\sup_{0\le \theta \le 2\pi} |u(r,\theta)| = o( r^{-\frac{1}{q}(1 + \frac{\gamma}{2})} )
	\quad\mbox{as $r \to \infty$},
\end{equation}
where
$\gamma = \min\{1-\beta, 2-\alpha \}$.
\end{theorem}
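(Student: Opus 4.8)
The plan is to adapt the Gilbarg--Weinberger energy method to the variable-coefficient equation \eqref{ell}. It rests on three ingredients: (i) a weighted energy inequality obtained by testing \eqref{ell} against $u|u|^{q-2}$; (ii) a Gilbarg--Weinberger-type analysis of the resulting differential inequalities for the boundary integrals over the circles $\{|x|=r\}$, which produces decay rates; and (iii) a one-dimensional Sobolev (oscillation) estimate on those circles, which upgrades the integral decay to the pointwise bound \eqref{decay}. Throughout write $I(r)=\int_{|x|=r}|u|^q\,ds$ and $J(r)=\int_{|x|=r}|u|^{q-2}|\nabla u|^2\,ds$; the hypothesis $u\in L^q(\Omega)$ says precisely that $\int_{r_0}^{\infty}I(r)\,dr<\infty$. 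It is worth bearing in mind the reformulation $\gamma=1-\max\{\beta,\alpha-1\}$: writing the principal part of \eqref{ell} in non-divergence form moves $\partial_i a_{ij}$ into the first-order part, which by (e-i) has size $O(|x|^{\alpha-1})$, so the ``effective drift'' has order $|x|^{\max\{\beta,\alpha-1\}}$, and it is this order that governs the rate.

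First I would establish the energy inequality. Multiplying \eqref{ell} by $u|u|^{q-2}$ and integrating over an annulus $\{r<|x|<R\}$ (or against a cut-off), and integrating by parts, ellipticity (e-i) bounds the bulk principal term from below by $(q-1)\lambda\int|\nabla u|^2|u|^{q-2}$; the drift term, rewritten via $\mathbf b\cdot\nabla u\,u|u|^{q-2}=\tfrac1q\,\mathbf b\cdot\nabla(|u|^q)$, produces after a further integration by parts the quantity $-\tfrac1q\int(\diver\mathbf b)|u|^q$, which combines with $\int c|u|^q$ ($\ge0$ by (e-iii)) into a nonnegative term for $q\ge2$ under (e-iv)(1), and is a controllable $O(|x|^{\beta-1})$ error under (e-iv)(2). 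Bounding the remaining terms by the coefficient sizes $|a_{ij}|\lesssim|x|^\alpha$, $|\mathbf b|\lesssim|x|^\beta$ (and, in the boundary flux of the principal part, using Cauchy--Schwarz in the form $\int_{|x|=\rho}|\nabla u||u|^{q-1}\,ds\le I(\rho)^{1/2}J(\rho)^{1/2}$), and using $u\in L^q(\Omega)$ to drive the outer ($|x|=R$) contribution to zero along a suitable sequence of radii, I would arrive at an inequality of the shape
\[\int_{|x|>r}|u|^{q-2}|\nabla u|^2\,dx\ \lesssim\ r^{\alpha}I(r)^{1/2}J(r)^{1/2}+r^{\beta}I(r)\qquad(\text{plus a controllable lower-order error under (e-iv)(2)}).\]
Alongside it I would record the elementary identity $I'(r)=r^{-1}I(r)+q\int_{|x|=r}|u|^{q-2}u\,\partial_\nu u\,ds$, whence $|I'(r)-r^{-1}I(r)|\le q\,I(r)^{1/2}J(r)^{1/2}$ by Cauchy--Schwarz.

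The core of the proof is to extract decay rates for $I(r)$ and for $J(r)$ --- equivalently for the tail energy $\int_{|x|>r}|u|^{q-2}|\nabla u|^2\,dx$ --- from the displayed inequality, the $I'$-estimate, and the a priori integrability $\int_{r_0}^{\infty}I<\infty$ (which forces $rI(r)$ to vanish along a sequence of radii). This is a Gilbarg--Weinberger-type bootstrap: an assumed decay of $I$ is fed through the energy inequality to produce a faster decay of the tail energy, hence (via $J=-\tfrac{d}{dr}\!\int_{|x|>r}|u|^{q-2}|\nabla u|^2$ together with the $I'$-estimate) a faster decay of $I$, the gain at each step being governed by the effective drift exponent $\max\{\beta,\alpha-1\}$ rather than by $\alpha$ and $\beta$ separately; iterating --- and, under (e-iv)(2), carrying the non-sign-definite $\diver\mathbf b$ contribution through a Gronwall argument --- one should reach
\[I(r)=o\bigl(r^{-\gamma/2}\bigr)\qquad\text{and}\qquad I(r)J(r)=o\bigl(r^{-2-\gamma}\bigr).\]
I expect this step to be the main obstacle: making the competition between the $r^{\alpha}$- and $r^{\beta}$-scalings resolve so that the bootstrap closes exactly at $\gamma=\min\{1-\beta,2-\alpha\}$, and, relatedly, promoting estimates that a priori hold only along sequences of radii to all large $r$ by exploiting the differential identities for $I$.

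It remains to pass from integral to pointwise decay on the circles. Fix $r$ and choose $\theta_0$ with $|u(r,\theta_0)|^q=\tfrac1{2\pi}\int_0^{2\pi}|u(r,\theta)|^q\,d\theta$; then for every $\theta$,
\[|u(r,\theta)|^q\ \le\ \tfrac1{2\pi}\int_0^{2\pi}|u(r,\theta')|^q\,d\theta'+q\int_0^{2\pi}|u|^{q-1}|\partial_\theta u|\,d\theta'\ \le\ \frac{I(r)}{2\pi r}+q\,I(r)^{1/2}J(r)^{1/2},\]
where the second inequality uses $|\partial_\theta u|\le r|\nabla u|$, the Jacobian relation $\int_{|x|=r}f\,ds=r\int_0^{2\pi}f\,d\theta$, and Cauchy--Schwarz on the angular integral. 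Inserting the two rates from the previous step gives $\sup_{0\le\theta\le2\pi}|u(r,\theta)|^q=o(r^{-1-\gamma/2})$, which is exactly \eqref{decay}.
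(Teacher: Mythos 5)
Your overall architecture (energy estimate, selection of good radii, angular oscillation estimate) is in the right family, but the central step of your plan is both unproven and, as formulated, unobtainable. You need, at \emph{every} large radius $r$, the two rates $I(r)=o(r^{-\gamma/2})$ and $I(r)J(r)=o(r^{-2-\gamma})$, and you propose to get them by a Gilbarg--Weinberger bootstrap that you yourself flag as ``the main obstacle'' without carrying it out. The second rate cannot hold pointwise in $r$ from the available information: everything you can prove (and everything the paper proves, namely $\int_{r\ge r_1} r^{\gamma}|u|^{q-2}|\nabla u|^2\,dx<\infty$) controls only \emph{integrals} of $J$ over radial intervals, so $J(r)$ may spike on a sparse set of radii and no pointwise bound on $I(r)J(r)$ follows; consequently your final oscillation estimate, which needs $I^{1/2}(r)J^{1/2}(r)$ at the \emph{same} radius $r$, can only be run along a sequence of radii chosen (e.g.\ by the mean value theorem) inside dyadic annuli. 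What is missing is the mechanism that upgrades decay along such a sequence $\{r_n\}$ to decay for all $r$: the paper does this with the maximum principle (available because $c\ge 0$ and (e-i) holds), applied on the annuli $r_n<r<r_{n+1}$ with $r_{n+1}\le 4r_n$, and this step does not appear anywhere in your proposal; ``exploiting the differential identities for $I$'' can at best promote the decay of $I$, not of the product $I\,J$ that your pointwise step requires.

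A second, smaller point: no iteration is needed at all. The paper obtains the weighted estimate in one shot by testing with $h(u)=|u|^q$ against a cut-off of the form $\eta(r)=r^{\gamma}\xi_1(r)\xi_2(r/R)$, where the exponent $\gamma=\min\{1-\beta,2-\alpha\}$ is chosen precisely so that $|\mathbf{b}\cdot\nabla\eta|\le C$, $|a_{ij}\partial_i\partial_j\eta|\le C$ and $|\partial_i a_{ij}\,\partial_j\eta|\le C$, while (e-iii)/(e-iv) make the $\diver\mathbf{b}-qc$ term harmless; letting $R\to\infty$ then gives $\int_{r\ge r_1} r^{\gamma}|u|^{q-2}|\nabla u|^2\,dx\le C\|u\|_{L^q}^q$ directly, with no boundary-flux terms to dispose of and no competition between the $r^{\alpha}$ and $r^{\beta}$ scalings to resolve. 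With that lemma in hand, the quantity $A_n=\int_{2^n}^{2^{n+1}}\frac{dr}{r}\int_0^{2\pi}|u|^{q-2}\bigl(r^2|u|^2+r^{1+\gamma/2}|u||\partial_\theta u|\bigr)\,d\theta$ tends to zero, the mean value theorem produces the good radii $r_n$, the fundamental theorem of calculus in $\theta$ gives $r_n^{1+\gamma/2}\sup_\theta|u(r_n,\theta)|^q\to 0$, and the maximum principle closes the argument. So the fix for your proposal is concrete: replace the bootstrap by the weighted test function, restrict your oscillation estimate to the mean-value radii, and add the maximum-principle step in between them.
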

\begin{remark}
When
$\alpha = 0$ and $\beta \le -1$, we have
\begin{align*}%
	\sup_{0\le \theta \le 2\pi} |u(r,\theta)| = o( r^{-\frac{2}{q}} )
	\quad\mbox{as $r \to \infty$},
\end{align*}%
which exhibits the correspondence to the condition
$u \in L^q(\Omega)$.
\end{remark}


As a corollary of the above theorem,
we have the following Liouville-type result.

\begin{corollary}\label{cor_liouville}
Assume {\rm (e-i)--(e-iv)}.
Let $\Omega = \mathbb{R}^2$ and let 
$u \in C^2(\mathbb{R}^2)$ be a solution to \eqref{ell} satisfying
$u \in L^q (\mathbb{R}^2)$
with some $q \in [2,\infty)$.
Then, it holds that $u \equiv 0$ on $\re^2$.
\end{corollary}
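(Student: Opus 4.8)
The plan is to deduce the Liouville property from the decay estimate of Theorem~\ref{thm_asym} together with the classical maximum principle, using crucially that $c\ge 0$. First I would apply Theorem~\ref{thm_asym} with $\Omega=\re^2$: since $\beta\le 1$ and $\alpha\le 2$ we have $\gamma=\min\{1-\beta,\,2-\alpha\}\ge 0$, so the exponent $\tfrac1q\bigl(1+\tfrac{\gamma}{2}\bigr)$ in \eqref{decay} is strictly positive, and hence
\begin{equation*}
	\lim_{r\to\infty}\ \sup_{0\le\theta\le 2\pi}|u(r,\theta)| = 0 ,
\end{equation*}
i.e.\ $u(x)\to 0$ uniformly as $|x|\to\infty$. (Only $\gamma\ge 0$ is needed here, not the precise rate.)

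Next, using $a_{ij}\in C^1(\re^2)$, I would rewrite \eqref{ell} in non-divergence form,
\begin{equation*}
	-\sum_{i,j=1}^2 a_{ij}(x)\,\partial_i\partial_j u
	+\sum_{j=1}^2\Bigl(b_j(x)-\sum_{i=1}^2\partial_i a_{ij}(x)\Bigr)\partial_j u
	+c(x)\,u = 0 ,
\end{equation*}
which on each ball $B_R=B_R(0)$ is a uniformly elliptic equation with ellipticity constant $\lambda$ from (e-i), with continuous --- hence bounded on $\overline{B_R}$ --- first-order coefficients, and with measurable nonnegative zeroth-order coefficient $c$. The growth of the coefficients at infinity plays no role, since the maximum principle is used only on bounded balls, and no upper bound on $c$ is required because a nonnegative zeroth-order term enters the maximum principle only favourably. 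Hence the weak maximum principle on $B_R$ gives
\begin{equation*}
	\sup_{B_R} u \ \le\ \sup_{\partial B_R} u^{+} \ \le\ \sup_{|x|=R}|u(x)| .
\end{equation*}
Fixing $x_0\in\re^2$ and letting $R\to\infty$, the right-hand side tends to $0$ by the first step, so $u(x_0)\le 0$. Since $-u$ is again a solution of \eqref{ell} and also tends to $0$ at infinity, the same argument gives $u(x_0)\ge 0$. As $x_0$ is arbitrary, $u\equiv 0$ on $\re^2$.

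The one point that needs care --- and the step I expect to be the main obstacle --- is the application of the maximum principle when $c$ is merely measurable and possibly unbounded. I would handle this by the standard device of passing to the open set $\{u>0\}$, on which the term $c\,u$ has a favourable sign and may be discarded, reducing to the weak maximum principle for an operator with no zeroth-order term; alternatively, for a strong maximum principle version, near any point where $u$ attains a positive maximum $M$ one has $u\ge M/2>0$, so that $c=(cu)/u$ is bounded there (because $cu$ is continuous), and Hopf's argument applies locally. Everything else is routine.
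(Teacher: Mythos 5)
Your argument is correct and is essentially the paper's own proof: the paper likewise combines the decay estimate of Theorem \ref{thm_asym} with the maximum principle (available by (e-i) and $c\ge 0$) to conclude $u\equiv 0$, only stating this in two lines. Your additional care with the merely measurable, possibly unbounded $c$ (restricting to $\{u>0\}$ so the zeroth-order term can be discarded) is a legitimate filling-in of a detail the paper leaves implicit.
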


\begin{remark}
The above corollary is sharp in the sense that
if $q = \infty$, then there exists a solution $u$ of (\ref{ell}) 
which is not a constant.
Indeed, let 
$a_{ij}(x) = \delta_{ij}$,
$\mathbf{b}(x) = (-x_1, x_2)$ (namely, $\beta = 1$),
and
$c(x) \equiv 0$.
Consider  
$u = u(x_1, x_2) = f(x_1)$ 
with
\begin{align*}%
	f(\tau) = \int_0^{\tau} e^{-s^2/2} \,ds, 
	\quad \tau \in \re.  
\end{align*}%
It is easy to see that $u \in L^{\infty}(\mathbb{R}^2)$ with 
$- \Delta u + \mathbf{b}(x)\cdot\nabla u = 0$ in $\re^2$.  
Obviously, $u$ is not a constant. 
\end{remark}

\begin{remark}
{\rm 
For the elliptic equation \eqref{ell} with $\Omega = \mathbb{R}^n$ and $n\ge 1$,
the result by
Seregin-Silvestre-\v{S}ver\'{a}k-Zlato\v{s} \cite[Theorem 1.2]{SeSiSvZl12}
implies a Liouville-type theorem under the conditions
that $a_{ij}$ is bounded, $\mathbf{b} \in BMO^{-1}$ with $\diver \mathbf{b} = 0$, 
and that $c \equiv 0$,
namely, every bounded solutions are constants.
Compared with their theorem, our result allows
the coefficients $a_{ij}, b$ to grow at spacial infinity.  
On the other hand, we impose 
on the stronger assumption on the solution such as $u \in L^q(\Omega)$ with 
some $q \in [2,\infty)$
}
\end{remark}

\begin{remark}
{\rm 
The above corollary may be regarded as a generalization of \cite[Corollary 1.2]{KoTeWapr},
which states that
every smooth solution $v$ of \eqref{ns} in $\mathbb{R}^2$
satisfying the condition $\nabla v \in L^q(\mathbb{R}^2)$ for some $q \in (2,\infty)$
must be a constant vector.
Recently, Liouville-type theorems of the stationary Navier-Stokes equations
are fully studied, and we refer the reader to
\cite{BiFuZh13, Ch14, Ch15, ChWo16, ChJaLepr, KoTeWa17, Se16} and the references therein. 
}
\end{remark}

The proofs of Theorem \ref{thm_asym} and Corollary \ref{cor_liouville}
are given in the next section.
Our approach is based on that of Gilbarg and Weinberger \cite{GiWe78}
and its generalization introduced in \cite{KoTeWapr}.
We first show a certain elliptic estimate of the solution $u$ by the energy method
(Lemma 2.1).
Then, combining it with
the integral mean value theorem for the radial variable $r$
and the fundamental theorem of calculus for the angular variable $\theta$,
we derive a pointwise decay estimate of the solution along with
a special sequence $\{ r_n \}_{n=1}^{\infty}$ of the radial variable satisfying 
$\displaystyle{\lim_{n\to\infty}r_n = \infty}$. 
Finally, applying the maximum principle in the annular domains
between $r=r_{n-1}$ and $r=r_n$,
we have the desired uniform decay like (\ref{decay})(Lemma 2.2).

Furthermore, our approach is also applicable to parabolic problems.
We discuss energy estimates of solutions to
the corresponding parabolic equation in the 
$n$-dimensional whole space $\mathbb{R}^n$; 
\begin{align}%
\label{para}
		\partial_t u - \sum_{i,j=1}^n \partial_i (a_{ij}(x,t) \partial_j u)
			+ \mathbf{b}(x,t)\cdot \nabla u + c(x,t) u = 0,
		\quad x \in \mathbb{R}^n, t\in I,
\end{align}%
where
$I \subset \mathbb{R}$ is an interval.
We impose similar assumptions on the coefficients
$a_{ij}(x,t)$, $\mathbf{b}(x,t)$ and $c(x,t)$, on the premise that they are measurable functions 
on $\mathbb{R}^n\times I$:
\par
\noindent
\vspace{2mm}
{\bf Assumptions on the coefficients for the parabolic problem \eqref{para}}
\begin{itemize}
\item[(p-i)]
$a_{ij} \in C^{1,0}(\mathbb{R}^n\times I)$, $a_{ij}=a_{ji}$ for $i, j = 1, \cdots,n$ and 
\begin{align*}
	\sum_{i,j=1}^n a_{ij}(x,t) \xi_i \xi_j \ge \lambda |\xi|^2 \quad (\xi \in \mathbb{R}^n, x\in \mathbb{R}^n, t \in I)
\end{align*}
with some $\lambda>0$. The growth condition 
\begin{align*}%
	| a_{ij}(x,t) | = O(|x|^2),
	\quad |\partial_i a_{ij}(x,t) | = O(|x|) \quad (|x| \to \infty)
\end{align*}%
holds locally uniformly in $t \in I$.
\item[(p-ii)]
$\mathbf{b}(x,t) = (b_1(x,t), \ldots, b_n(x,t)) \in C^{1,0}(\mathbb{R}^n\times I)$
satisfies
\begin{align*}
	\mathbf{b}(x,t) = O(|x|) \quad (|x| \to \infty)
\end{align*}
locally uniformly in $t \in I$.
\item[(p-iii)] 
$c(x, t)$ is nonnegative,  and 
$\diver \mathbf{b}(x,t) = \sum_{j=1}^{n} \partial_{x_j} b_j(x,t) \le 2c(x,t)$
holds for all
$(x,t) \in \mathbb{R}^n \times I$.
\end{itemize}

Under these assumptions,
we show the following energy identity
for solutions belonging to $L^q(\mathbb{R}^n \times I)$.

\begin{theorem}\label{thm_en_est}
Assume {\rm (p-i)} and {\rm (p-ii)}.
Let $u \in C^{2,1}(\mathbb{R}^n \times I)$ be a solution to \eqref{para} satisfying
$u \in L^{q} (\mathbb{R}^n \times I)$
with some $q \in [2,\infty)$.
Then, we have the energy identity
\begin{align}%
\label{en_id}
	&\int_{\mathbb{R}^n} |u(x,t)|^q \,dx
		+ q(q-1) \int_{s}^t \int_{\mathbb{R}^n} |u(x,\tau)|^{q-2} 
		\sum_{i, j =1}^na_{ij}(x, \tau)\partial_iu(x, \tau)\partial_ju(x, \tau)\,dx d\tau \\
\notag
	&\quad + \int_{s}^t \int_{\mathbb{R}^n} (-\diver \mathbf{b}(x,\tau) + qc(x,\tau)) |u(x,\tau)|^q \,dx d\tau \\
\notag
	&=
		\int_{\mathbb{R}^n} |u(x,s)|^q \,dx 
\end{align}%
for all $t, s \in I$ such that  $s \le t$.  
\end{theorem}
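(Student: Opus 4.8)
The plan is to derive the energy identity \eqref{en_id} by testing the equation \eqref{para} against $|u|^{q-2}u$ (appropriately cut off in space), integrating over the slab $\mathbb{R}^n \times (s,t)$, and then passing to the limit in the cutoff. First I would fix a smooth cutoff $\phi_R$ with $\phi_R \equiv 1$ on $B_R$, $\phi_R \equiv 0$ outside $B_{2R}$, $|\nabla \phi_R| \lesssim R^{-1}$, and multiply \eqref{para} by $\phi_R^2 |u|^{q-2}u$. The time derivative term produces $\frac{1}{q}\frac{d}{d\tau}\int \phi_R^2 |u|^q\,dx$; integrating in $\tau$ over $(s,t)$ gives the boundary-in-time terms $\frac1q\int\phi_R^2|u(x,t)|^q - \frac1q\int\phi_R^2|u(x,s)|^q$. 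The principal part, after integration by parts in $x$, yields $(q-1)\int\int \phi_R^2 |u|^{q-2}\sum a_{ij}\partial_i u\,\partial_j u$ plus an error term $2\int\int \phi_R |u|^{q-2}u\sum a_{ij}(\partial_i\phi_R)(\partial_j u)$ coming from differentiating $\phi_R$. The drift term $\mathbf b\cdot\nabla u$ paired with $\phi_R^2|u|^{q-2}u = \frac1q \phi_R^2 \nabla(|u|^q)$ integrates by parts to $-\frac1q\int\int\diver(\phi_R^2\mathbf b)|u|^q = -\frac1q\int\int\phi_R^2(\diver\mathbf b)|u|^q - \frac2q\int\int\phi_R(\nabla\phi_R\cdot\mathbf b)|u|^q$; the zeroth-order term contributes $\int\int\phi_R^2 c|u|^q$. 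Multiplying through by $q$ reproduces \eqref{en_id} with $\phi_R^2$ inserted and two remainder integrals.

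The heart of the matter is showing that the two remainder integrals vanish as $R\to\infty$. By Hölder's inequality (with exponents $q/(q-1)$ and $q$, or for $q=2$ directly Cauchy–Schwarz), the first remainder is bounded by $C\left(\int_s^t\int_{B_{2R}\setminus B_R}\phi_R^2|u|^{q-2}\sum a_{ij}\partial_iu\partial_ju\right)^{1/2}\cdot\left(\int_s^t\int_{B_{2R}\setminus B_R}|\nabla\phi_R|^2 (\text{coefficient factors})|u|^q\right)^{1/2}$; using (p-i), $|a_{ij}| = O(|x|^2)$ and $|\nabla\phi_R|\lesssim R^{-1}$, the coefficient-weighted factor $|\nabla\phi_R|^2|a_{ij}|$ is $O(1)$ on the annulus $B_{2R}\setminus B_R$, so this factor is controlled by $\|u\|_{L^q(\mathbb R^n\times I)}^{q/2}$, which is finite and — crucially — restricted to the annulus, tends to $0$. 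The first factor, the weighted Dirichlet energy on the annulus, is controlled once we know the \emph{full} weighted Dirichlet integral is finite, which itself follows from the identity with $\phi_R^2$ after absorbing; one runs the standard bootstrap: keep the good term on the left, bound the remainder by $\frac12$ of it plus lower order, obtain a uniform-in-$R$ bound, then conclude the annular piece is a tail of a convergent integral. The second remainder is handled more easily: $|\phi_R\nabla\phi_R\cdot\mathbf b|\lesssim R^{-1}\cdot O(R) = O(1)$ on the annulus by (p-ii), so it is bounded by $C\int_s^t\int_{B_{2R}\setminus B_R}|u|^q\,dx\,d\tau \to 0$. For the main boundary terms, dominated convergence with the integrable majorant $|u(\cdot,t)|^q\in L^1(\mathbb R^n)$ for a.e. fixed $t$ (which holds after noting $t\mapsto\int|u(x,t)|^q dx$ is finite for a.e.\ $t$ by Fubini, and in fact for all $t$ by the regularity $u\in C^{2,1}$ combined with the monotone structure) gives convergence to $\int|u(x,t)|^q\,dx$ and $\int|u(x,s)|^q\,dx$.

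The main obstacle I anticipate is the simultaneous control of the weighted Dirichlet integral: a priori we only know $u\in L^q$, not that $\sum a_{ij}\partial_i u\partial_j u$ is integrable against $|u|^{q-2}$, so the "good term" on the left of the truncated identity is not yet known to be finite. The resolution is the self-improving absorption sketched above — since $|\nabla\phi_R|^2|a_{ij}| = O(1)$ and $|\nabla\phi_R\cdot\mathbf b| = O(1)$ precisely because the growth rates $|x|^2$ and $|x|$ in (p-i)--(p-ii) are exactly matched to the quadratic decay of $|\nabla\phi_R|$ and $|\nabla\phi_R|^2$ — one gets, from the truncated identity, $(q-1)\int\int\phi_R^2|u|^{q-2}\sum a_{ij}\partial_iu\partial_ju \le C\|u\|_{L^q(\mathbb R^n\times I)}^q + \tfrac{q-1}{2}\int\int\phi_R^2|u|^{q-2}\sum a_{ij}\partial_iu\partial_ju$, hence a bound uniform in $R$, and then Fatou ($R\to\infty$) yields finiteness of the full weighted Dirichlet integral; with that in hand the remainder estimates close and the limiting identity follows. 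A minor technical point worth addressing is that $|u|^{q-2}u$ is only $C^1$ when $q\ge 2$ (which is assumed) and that near $\{u=0\}$ the chain rule for $\nabla(|u|^q)$ is justified for $q\ge 2$; for $q=2$ everything is smooth and the argument simplifies. Finally, the nonnegativity of $c$ in (p-iii) is used (only) to guarantee the relevant term has a sign in the corollary applications, but for the identity itself no sign is needed — only (p-i) and (p-ii), as the statement asserts.
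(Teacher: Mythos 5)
Your overall strategy (test the equation with a cut--off version of $|u|^{q-2}u$, integrate over the slab $\mathbb{R}^n\times(s,t)$, and let $R\to\infty$) is the same as the paper's, but the paper sets up the algebra so that the cut--off error contains \emph{no} $\nabla u$: it writes $h'(u)\times\eqref{para}$, $h(u)=|u|^q$, as a spatial divergence of $\psi_R\sum_j a_{ij}\partial_j(h(u))-\sum_j a_{ij}(\partial_j\psi_R)h(u)-\psi_R h(u)b_i$ plus remaining terms, so that after integration the only remainder is $\int_s^t\int h(u)\bigl[\sum_{i,j}\partial_j(a_{ij}\partial_i\psi_R)+\mathbf b\cdot\nabla\psi_R\bigr]$. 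By (p-i)--(p-ii) and $|\nabla\psi_R|\le CR^{-1}$, $|\nabla^2\psi_R|\le CR^{-2}$, this integrand is $O(|u|^q)$ on the annulus $B_{2R}\setminus B_R$, so it vanishes as $R\to\infty$ using only $u\in L^q(\mathbb{R}^n\times I)$; no a priori finiteness of the weighted Dirichlet integral, and hence no absorption, is ever needed (its finiteness comes out of the limiting identity because that term is nonnegative).

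The genuine gap in your version is precisely the bootstrap that is supposed to supply the uniform-in-$R$ bound. The truncated identity does not only contain the good gradient term, the two annular remainders, and $C\|u\|_{L^q}^q$: it also contains $\tfrac1q\int\phi_R^2|u(x,s)|^q\,dx$, which is not known to be finite uniformly in $R$ for \emph{every} $s$ (from $u\in L^q(\mathbb{R}^n\times I)$ one only gets $u(\cdot,s)\in L^q(\mathbb{R}^n)$ for a.e.\ $s$), and the unsigned term $\tfrac1q\int_s^t\int\phi_R^2\bigl(\diver\mathbf b-qc\bigr)|u|^q\,dx\,d\tau$. Under (p-i)--(p-ii) alone there is no sign for this term and no bound on $\diver\mathbf b$ or on $c$ (only $|\mathbf b|=O(|x|)$ is assumed, nothing about its divergence), so your claimed inequality ``gradient term $\le C\|u\|_{L^q}^q+\tfrac12\,$gradient term'' does not follow; in particular your closing remark that only (p-i)--(p-ii) enter your argument is not accurate — your absorption step would in effect require (p-iii) (which makes $-\diver\mathbf b+qc\ge0$) together with an a.e.-in-$s$ selection and a continuity argument, whereas the theorem, and the paper's proof, use only (p-i)--(p-ii). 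The repair inside your scheme is exactly the paper's device: in the cut--off error $2\int\!\!\int\phi_R|u|^{q-2}u\sum_{i,j}a_{ij}\partial_i\phi_R\,\partial_j u=\tfrac2q\int\!\!\int\phi_R\sum_{i,j}a_{ij}\partial_i\phi_R\,\partial_j(|u|^q)$, integrate by parts once more so both derivatives land on the cut--off (and one on $a_{ij}$); the remainder then involves only $|u|^q$ times bounded factors on the annulus, the absorption becomes unnecessary, and the proof closes under (p-i)--(p-ii) alone.
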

\begin{remark}
In Theorem \ref{thm_en_est}, we do not need the assumption {\rm (p-iii)}.
\end{remark}
By Theorem \ref{thm_en_est},
we have the following Liouville-type results
on solutions of the Cauchy problem of \eqref{para} and on ancient solutions of \eqref{para}.
\begin{corollary}\label{cor_liouville_pr}
In addition to {\rm (p-i)} and {\rm (p-ii)}, assume that {\rm (p-iii)} holds.\\
{\rm (i)} Let $u \in C^2(\mathbb{R}^n \times [0, T))$ be a solution of \eqref{para}
satisfying $u \in L^{q} (\mathbb{R}^n \times [0, T))$
with some $q \in [2,\infty)$.
Moreover, we assume that $u(x,0) \equiv 0$ on $\mathbb{R}^n$.
Then, we have $u \equiv 0$ on $\mathbb{R}^n \times [0, T)$.  \\
{\rm (ii)} Let $u \in C^2(\mathbb{R}^n \times (-\infty, 0))$ be 
an ancient solution of \eqref{para} satisfying 
$u \in L^{q} (\mathbb{R}^n \times (-\infty, 0))$
with some $q \in [2,\infty)$.
Then, we have $u \equiv 0$ on $\mathbb{R}^n \times (-\infty, 0)$. 
\end{corollary}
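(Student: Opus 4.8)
The plan is to obtain both assertions directly from the energy identity \eqref{en_id} of Theorem \ref{thm_en_est}, discarding the two integral terms on its left-hand side once we know they are nonnegative. This is precisely where assumption (p-iii) enters (recall it is \emph{not} needed for \eqref{en_id} itself): by (p-i) one has $\sum_{i,j=1}^n a_{ij}(x,\tau)\,\partial_i u(x,\tau)\,\partial_j u(x,\tau) \ge \lambda\,|\nabla u(x,\tau)|^2 \ge 0$, and by (p-iii), together with $q \ge 2$ and $c \ge 0$, one has $-\diver\mathbf{b}(x,\tau) + q\,c(x,\tau) \ge (q-2)\,c(x,\tau) \ge 0$. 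Writing $g(t) := \int_{\mathbb{R}^n} |u(x,t)|^q\,dx$ (finite for each $t$ since \eqref{en_id} holds), the identity \eqref{en_id} then yields the monotonicity
\begin{align*}
	g(t) \le g(s) \qquad \text{for all } s \le t \text{ in the underlying time interval.}
\end{align*}

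For part (i) I would apply \eqref{en_id} with $s = 0$ and $t \in [0,T)$. Since $u(\cdot,0) \equiv 0$ the right-hand side is $0$, so the nonnegativity just noted forces $g(t) = 0$ for every $t \in [0,T)$; by continuity of $u$ this gives $u \equiv 0$ on $\mathbb{R}^n \times [0,T)$.

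For part (ii), fix $t < 0$ arbitrarily. For $s_0 < t$, integrating the inequality $g(t) \le g(s)$ over $s \in (s_0,t)$ gives
\begin{align*}
	(t - s_0)\,g(t) \le \int_{s_0}^{t} g(s)\,ds \le \int_{-\infty}^{0} g(s)\,ds = \|u\|_{L^q(\mathbb{R}^n\times(-\infty,0))}^{q} < \infty .
\end{align*}
Letting $s_0 \to -\infty$ forces $g(t) = 0$, and since $t < 0$ was arbitrary we conclude $u \equiv 0$ on $\mathbb{R}^n\times(-\infty,0)$.

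Since Theorem \ref{thm_en_est} carries the analytic burden, the proof is essentially bookkeeping; the only points needing care are the sign analysis of the two integral terms (exactly where (p-iii) and $q \ge 2$ are used) and, in the ancient case, the elementary fact that a nonnegative, non-increasing function integrable on $(-\infty,0)$ must vanish identically. I do not anticipate any further obstacle.
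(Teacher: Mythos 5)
Your proof is correct and follows essentially the same route as the paper: both parts rest on the energy identity \eqref{en_id} of Theorem \ref{thm_en_est} combined with the ellipticity of $(a_{ij})$ and the sign analysis $-\diver\mathbf{b}+qc\ge (q-2)c\ge 0$ from (p-iii) and $q\ge 2$. The only cosmetic difference is in part (ii): the paper uses $u\in L^q(\mathbb{R}^n\times(-\infty,0))$ to extract a sequence $s_n\to-\infty$ with $\|u(s_n)\|_{L^q}\to 0$ and passes to the limit in the monotonicity inequality, while you integrate that same inequality in $s$ --- the identical elementary fact, packaged slightly differently.
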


\begin{remark}
{\rm 
For the heat equation
$\partial_t v - \Delta v =0$
on a complete noncompact Reimannian manifold with the 
nonnegative Ricci curvature,
Souplet--Zhang \cite{SoZh} proved that
any positive ancient (or entire) solution $u$ having the bound
\begin{align*}
	u(x,t) = O(e^{o(d(x)+\sqrt{t})})
	\quad\mbox{as $d(x) \to \infty$}
\end{align*}
must be a constant,
where
$d(x)$ is the distance from a base point.
They also proved that any ancient (or entire) solution $u$ having the bound
\begin{align*}
	u(x,t) = o(d(x) + \sqrt{t})
	\quad\mbox{as $d(x) \to \infty$}
\end{align*}
must be a constant.
Compared with their result, we are able to treat more 
general time-dependent coefficients
which may grow at the spatial infinity. 
On the other hand, we impose solutions $u$ on the stronger assumption that 
$u \in L^{q} (\mathbb{R}^n \times I)$ with some $q \in [2,\infty)$.
}
\end{remark}

\begin{remark}
{\rm 
The Liouville-type theorem for
the non-stationary Navier-Stokes equations
\begin{align*}%
	\left\{ \begin{array}{l}
		\partial_t v -\Delta v + (v \cdot \nabla)v + \nabla p = 0,\\
		\diver v = 0,
	\end{array} \right.
	\quad (x,t) \in \mathbb{R}^n\times I
\end{align*}
has been fully studied, where $I=(0, T)$ or $I=(-\infty, 0)$.  
We refer the reader to
\cite{KoNaSeSv09, Ch11, Gi13}
and the references therein.
}
\end{remark}
\section{Proof of Theorem \ref{thm_asym}}
In what follows, we shall denote by
$C$
various constants which may change from line to line.
In particular, we denote by
$C = C(*, . . . , *)$
constants depending only on the quantities appearing in parentheses.
\begin{lemma}\label{lem_1}
Under the assumptions on Theorem \ref{thm_asym},
for every $r_1 > r_0$, we have
\begin{align*}
	\int_{r\ge r_1} r^{\gamma} |u|^{q-2} |\nabla u|^2 \,dx
	\le C(q,r_1) \int_{\Omega} |u|^q \,dx,
\end{align*}
where
$\gamma = \min \{ 1-\beta, 2-\alpha \}$.
\end{lemma}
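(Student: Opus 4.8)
The plan is to use the energy method: multiply equation \eqref{ell} by $\zeta^2 |u|^{q-2} u$, where $\zeta = \zeta(x)$ is a suitable cutoff function, and integrate over $\Omega$. The term $-\sum_{i,j}\partial_i(a_{ij}\partial_j u)$, after integration by parts, produces the good quantity $(q-1)\int \zeta^2 |u|^{q-2} \sum_{i,j}a_{ij}\partial_i u\,\partial_j u$, which by ellipticity (e-i) dominates $(q-1)\lambda \int \zeta^2 |u|^{q-2}|\nabla u|^2$, plus a cross term involving $\nabla\zeta$ that can be absorbed using Young's inequality at the cost of a term $C\int |\nabla\zeta|^2 |u|^q$. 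The zeroth-order term $\int \zeta^2 c |u|^q$ is nonnegative by (e-iii) and can be discarded. The drift term $\int \zeta^2 |u|^{q-2}u\,\mathbf{b}\cdot\nabla u$ is the delicate one: writing $|u|^{q-2}u\,\nabla u = \frac1q \nabla(|u|^q)$ and integrating by parts moves the derivative onto $\zeta^2 \mathbf{b}$, giving $-\frac1q\int \diver(\zeta^2\mathbf{b})|u|^q = -\frac1q\int \zeta^2(\diver\mathbf{b})|u|^q - \frac2q\int \zeta(\nabla\zeta\cdot\mathbf{b})|u|^q$. Under (e-iv)(1), the first piece combines with $q\int\zeta^2 c|u|^q$ to stay nonnegative; under (e-iv)(2) it is bounded by $C\int\zeta^2 |x|^{\beta-1}|u|^q$, and since $\beta\le 1$, on the support of $\zeta$ (which will be $\{|x|\ge r_1\}$) this is controlled by $C\int|u|^q$.

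The key is the choice of the weight. Instead of a plain cutoff, I would take $\zeta^2(x) = \phi(|x|)^2$ with $\phi$ chosen so that $\zeta^2 \sim r^{\gamma}$ on an annulus and the error terms stay integrable — more precisely, for $R$ large set $\zeta_R(x) = \eta(x)\,\psi_R(x)$ where $\eta$ cuts off near $|x|=r_1$ (equals $1$ for $|x|\ge 2r_1$, supported in $|x|\ge r_1$) and $\psi_R$ is a smooth truncation equal to $1$ for $|x|\le R$, vanishing for $|x|\ge 2R$, with $|\nabla\psi_R|\le C/R$; and I fold the growth weight $r^{\gamma}$ directly into the test function by multiplying by $|x|^{\gamma}$ (or a smoothed version thereof). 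Then the "bad" terms that must be bounded independently of $R$ are: (a) $\int |\nabla\zeta|^2 |u|^q$, where differentiating the weight $r^{\gamma}$ produces $r^{\gamma-2}|u|^q \le C r_1^{\gamma-2}|u|^q$ on $|x|\ge r_1$, hence $\le C(q,r_1)\int|u|^q$; (b) the $\nabla\zeta\cdot\mathbf{b}$ drift term, which produces $r^{\gamma-1}|\mathbf{b}||u|^q \le C r^{\gamma-1+\beta}|u|^q$, and since $\gamma \le 1-\beta$ we have $\gamma-1+\beta\le 0$, so this is again $\le C(q,r_1)\int|u|^q$; (c) the term from differentiating $a_{ij}$ against the weight: here $a_{ij}\partial_i(\text{weight})$ behaves like $|x|^{\alpha}\cdot r^{\gamma-1}$, and combined with $|u|^{q-2}u\partial_j u$ and Young's inequality against the good term we need $|x|^{\alpha-1}\cdot r^{\gamma}\cdot r^{\gamma-2}$-type balance; the constraint $\gamma\le 2-\alpha$ is exactly what makes the relevant exponent $\le 0$.

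The main obstacle I anticipate is bookkeeping the $R\to\infty$ limit rigorously: one must verify that all the terms carrying the cutoff $\psi_R$ either have a favorable sign or tend to zero, which requires that $r^{\gamma}|u|^{q-2}|\nabla u|^2$ and $r^{\gamma-2}|u|^q$, $r^{\gamma-1+\beta}|u|^q$ be a priori locally integrable — true since $u\in C^2$ — and then invoke the monotone convergence theorem on the good term $\int \eta^2 |x|^{\gamma}|u|^{q-2}|\nabla u|^2$ as $R\to\infty$, using the uniform bound $C(q,r_1)\|u\|_{L^q(\Omega)}^q$ obtained for each $R$. The delicate point in the absorption is ensuring the constant multiplying the good gradient term stays strictly positive after Young's inequality — this works because in each cross term one can put a large power of $\epsilon$ on the good factor $|u|^{q-2}|\nabla u|^2 r^\gamma$ and a compensating $1/\epsilon$ on a factor that is already bounded by $C(r_1)|u|^q$ thanks to the exponent inequalities $\gamma-1+\beta\le 0$ and $\gamma-2\le -\alpha$ (equivalently $\gamma\le 2-\alpha$) on $\{|x|\ge r_1\}$. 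Once the absorption is done and $R\to\infty$, the stated estimate follows, with $C(q,r_1)$ depending on $q$, $\lambda$, $r_1$, and the implied constants in (e-i), (e-ii), (e-iv).
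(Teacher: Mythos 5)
Your overall strategy (testing with a weight that behaves like $r^{\gamma}$ times cutoffs, using (e-i) for coercivity, the sign of $c$, and the identity $|u|^{q-2}u\,\nabla u=\tfrac1q\nabla(|u|^q)$ for the drift) is the right one, and your treatment of the drift terms and of the $\operatorname{div}\mathbf{b}$ term under (e-iv)(1)/(2) is correct. The gap is in step (a)/(c), the cross term coming from the second-order part. After one integration by parts the cross term is
\begin{align*}
\int_\Omega |u|^{q-2}u\,\sum_{i,j}a_{ij}(x)\,\partial_j u\,\partial_i\bigl(\zeta^2\bigr)\,dx ,
\end{align*}
and its absorption costs \emph{not} $C\int|\nabla\zeta|^2|u|^q$, as you claim, but a term carrying the unbounded coefficient: with plain Young against the good term $\lambda\int\zeta^2|u|^{q-2}|\nabla u|^2$ you pay $C\int |a(x)|^2\,|\nabla\zeta|^2\,\zeta^{-2}|u|^q\sim\int r^{2\alpha+\gamma-2}|u|^q$, which is bounded by $C\int|u|^q$ only if $\gamma\le 2-2\alpha$. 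For $\alpha>0$ this is strictly weaker than the claimed $\gamma=\min\{1-\beta,\,2-\alpha\}$, so the absorption as you describe it does not prove the lemma; your exponent bookkeeping in (c) (``$|x|^{\alpha-1}\cdot r^{\gamma}\cdot r^{\gamma-2}$'') is not coherent and does not rescue this.

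There are two standard repairs. Either perform the Cauchy--Schwarz inequality with respect to the quadratic form $a$ itself, $|a\xi\cdot\eta|\le (a\xi\cdot\xi)^{1/2}(a\eta\cdot\eta)^{1/2}$, so the cost is $C\int \zeta^{-2}\bigl(a\nabla(\zeta^2)\cdot\nabla(\zeta^2)\bigr)|u|^q\sim\int r^{\alpha+\gamma-2}|u|^q$, which is fine precisely when $\gamma\le 2-\alpha$; or integrate by parts once more using $|u|^{q-2}u\,\nabla u=\tfrac1q\nabla(|u|^q)$ so that \emph{all} derivatives fall on the weight. The latter is what the paper does: its identity puts the error in the form $\int|u|^q\bigl[\sum_{i,j}\partial_i(a_{ij}\partial_j\eta)+\mathbf{b}\cdot\nabla\eta\bigr]\,dx$ with $\eta=r^{\gamma}\xi_1\xi_2(r/R)$, and then the hypothesis $|\partial_i a_{ij}|=O(|x|^{\alpha-1})$ (which your argument never uses, a sign that something is off) together with $|\nabla\eta|\le Cr^{\gamma-1}$, $|\partial_i\partial_j\eta|\le Cr^{\gamma-2}$ gives bounds of order $r^{\alpha+\gamma-2}\le C$, with no absorption needed at all. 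With either repair your proof goes through; as written, the key absorption step fails for $\alpha>0$.
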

\begin{proof}
Let
$\eta = \eta(r) \in C_0^{\infty}(\Omega)$
and let
$h = h(u) \in C^1(\mathbb{R})$
be a piecewise $C^2$ function specified later.
We start with the following identity:
\begin{align*}
	& - \sum_{i=1}^2 \partial_i
		\left[ \eta(r) \sum_{j=1}^2 a_{ij} \partial_j (h(u))
			- \sum_{j=1}^2 a_{ij} (\partial_j \eta) h(u) - \eta(r) h(u) b_i(x) \right] \\
	&= - \eta(r) h''(u) \left( \sum_{i,j=1}^2 a_{ij} \partial_i u \partial_j u \right) \\
	&\quad
		+ h(u) \left[ \sum_{i,j = 1}^2 \partial_j (a_{ij} \partial_i \eta)
		+ \mathbf{b}(x) \cdot \nabla \eta(r) + \eta(r)\diver \mathbf{b} \right]\\
	&\quad - \eta (r) h'(u)
		\left[ \sum_{i,j=1}^2 \partial_j (a_{ij} \partial_i u ) - \mathbf{b} \cdot \nabla u \right]
\end{align*}
Since $u$ satisfies the equation \eqref{ell},
integration of the above identity over $\Omega$ yields 
\begin{align*}
	\int_{\Omega} \eta(r) h''(u) \left( \sum_{i,j=1}^2 a_{ij} \partial_i u \partial_j u \right) \,dx
	&= \int_{\Omega} h(u) \left[ \sum_{i,j = 1}^2 \partial_j (a_{ij} \partial_i \eta)
							+ \mathbf{b}(x) \cdot \nabla \eta(r) \right] \,dx \\
	&\quad + \int_{\Omega} \eta(r) (h(u) \diver \mathbf{b}(x) -  h'(u) c(x) u) \,dx.
\end{align*}
Let
$r_1 > r_0$
and let
$\xi_1 = \xi_1(r) \in C^{\infty}(\Omega)$
be nonnegative, monotone increasing in $r$,
and satisfy $\xi(r) = 1$ for $r \ge r_1$ and $\xi(r) = 0$ for $r \le (r_0 + r_1)/2$.
Let
$\xi_2 = \xi_2(r) \in C_0^{\infty}(B_1(0))$
be nonnegative, monotone decreasing, and satisfy
$\xi_2(r) = 1$ for $r \le 1/2$.
We choose the cut-off function
$\eta(r)$
as
\begin{align*}
	\eta(r) = r^{\gamma} \xi_1(r) \xi_2 \left(\frac{r}{R}\right),
\end{align*}
with the parameter $R\ge 1$,
where
$\gamma = \min\{ 1-\beta, 2-\alpha \}$.
Then, we have
$|\nabla \eta(r)| \le Cr^{\gamma-1}$,
$|\partial_i \partial_j \eta(r) | \le Cr^{\gamma-2}$.
Now, we take
$h(u) = |u|^q$.
Then, it holds that 
$h'(u) = q|u|^{q-2}u$
and
$h''(u) = q(q-1) |u|^{q-2}$.
Therefore, we obtain
\begin{align}
\label{eq_energy}
	&q(q-1) \int_{\Omega} \eta(r) |u|^{q-2} \left( \sum_{i,j=1}^2 a_{ij}(x) \partial_i u \partial_j u \right) \,dx \\
\nonumber
	&= \int_{\Omega} |u|^q
		\left[ \sum_{i,j=1}^2 \partial_i ( a_{ij}(x) \partial_j \eta) + \mathbf{b}(x) \cdot \nabla \eta \right] \,dx \\ \nonumber
		& \mbox{} +  \int_{\Omega} \eta ( \diver \mathbf{b}(x) -q c(x) ) |u|^q \,dx.
\nonumber		
\end{align}
By the assumptions (e-i) and (e-ii), the estimates
\begin{align*}%
	& |\mathbf{b}(x) \cdot \nabla \eta(r) | \le C,\\
	& | a_{ij}(x) \partial_i \partial_j \eta(r) | \le C, \quad
	| \partial_i a_{ij}(x) \partial_j \eta(r) | \le C, 
	\quad i, j=1, 2
\end{align*}%
hold, and hence the first term of RHS of \eqref{eq_energy} is estimated by
$C \int_{\Omega} |u|^q \,dx$.
Furthermore, since  $c(x) \ge 0$,  implied by the assumption (e-iii), 
we have by (e-iv) that  
$\diver \mathbf{b}(x) -q c(x) \le 0$
or
$| \eta \diver \mathbf{b} | \le C$,
and hence,
\begin{align*}
	\int_{\Omega} \eta ( \diver \mathbf{b}(x) -q c(x) ) |u|^q \,dx
		\le C \int_{\Omega} |u|^q \,dx
\end{align*}
holds in both cases.
Thus, we obtain from the above estimates and the assumption (e-i) that 
\begin{align*}
	&\int_{r_1 \le r \le R/2} r^{\gamma} |u|^{q-2} |\nabla u|^2\,dx
	\le C \int_{\Omega} |u|^q \,dx.
\end{align*}
Letting $R\to \infty$, we conclude
\begin{align*}
	\int_{r\ge r_1} r^{\gamma} |u|^{q-2} |\nabla u|^2 \,dx
	\le C \int_{\Omega} |u|^q \,dx.
\end{align*}
This completes the proof of Lemma \ref{lem_1}.  
\end{proof}
\begin{lemma}\label{lem_asym}
Under the assumptions on Theorem \ref{thm_asym}, we have
\begin{align*}
	\lim_{r\to \infty} r^{1+\frac{\gamma}{2}} \sup_{\theta \in [0,2\pi]} |u(r,\theta)|^q = 0.
\end{align*}
\end{lemma}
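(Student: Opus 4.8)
The plan is to follow the Gilbarg--Weinberger scheme adapted in \cite{KoTeWapr}: convert the weighted energy bound of Lemma \ref{lem_1} into information on circles $|x|=r$, select a good sequence of radii $\{r_n\}$ by a dyadic pigeonhole argument, promote the circle-averaged bound to a pointwise bound via the fundamental theorem of calculus in $\theta$, and finally fill the annuli between consecutive radii with the maximum principle. To begin, pass to polar coordinates $(r,\theta)$, so that $dx=r\,dr\,d\theta$ and $|\nabla u|^2=|\partial_r u|^2+r^{-2}|\partial_\theta u|^2\ge r^{-2}|\partial_\theta u|^2$. Then Lemma \ref{lem_1} yields $\int_{r_1}^\infty r^{\gamma-1}B(r)\,dr<\infty$, where $B(r):=\int_0^{2\pi}|u(r,\theta)|^{q-2}|\partial_\theta u(r,\theta)|^2\,d\theta$, while $u\in L^q(\Omega)$ gives $\int_{r_1}^\infty rA(r)\,dr<\infty$ with $A(r):=\int_0^{2\pi}|u(r,\theta)|^q\,d\theta$. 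Setting $h(r):=rA(r)+r^{\gamma-1}B(r)$, the tails $\varepsilon_n:=\int_{2^n}^{2^{n+1}}h(r)\,dr$ tend to $0$; choosing (for $n$ large enough that $2^n>r_0$) the point $r_n\in[2^n,2^{n+1}]$ at which the continuous function $h$ attains its minimum on that interval gives $h(r_n)\le\varepsilon_n/2^n$, hence $r_n h(r_n)\le 2\varepsilon_n\to0$. In particular $r_n\to\infty$ and
\[
	r_n^2 A(r_n)\to0,\qquad r_n^{\gamma}B(r_n)\to0 \quad(n\to\infty).
\]

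Next, work on the circle $r=r_n$. Since $q\ge2$, the map $\theta\mapsto|u(r_n,\theta)|^q$ is $C^1$ with derivative $q|u|^{q-2}u\,\partial_\theta u$, so for the angle $\theta^*$ where $|u(r_n,\cdot)|^q$ is maximal one has $|u(r_n,\theta^*)|^q=|u(r_n,\theta_0)|^q+\int_{\theta_0}^{\theta^*}\partial_\theta|u|^q\,d\theta$ for every $\theta_0$. Averaging over $\theta_0\in[0,2\pi]$ and estimating $\int_0^{2\pi}|\partial_\theta|u|^q|\,d\theta\le q\int_0^{2\pi}|u|^{q-1}|\partial_\theta u|\,d\theta\le q\,A(r_n)^{1/2}B(r_n)^{1/2}$ by Cauchy--Schwarz (writing $|u|^{q-1}=|u|^{q/2}\cdot|u|^{(q-2)/2}$), we obtain
\[
	\sup_{\theta\in[0,2\pi]}|u(r_n,\theta)|^q\le\frac{1}{2\pi}A(r_n)+q\,A(r_n)^{1/2}B(r_n)^{1/2}.
\]
Now multiply by $r_n^{1+\gamma/2}$. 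Since $\gamma=\min\{1-\beta,2-\alpha\}\in[0,2]$ and $r_n\ge1$, we have $r_n^{1+\gamma/2}\le r_n^2$, while the factorization $r_n^{1+\gamma/2}=(r_n^2)^{1/2}(r_n^\gamma)^{1/2}$ handles the cross term; thus the right-hand side is bounded by $\frac{1}{2\pi}r_n^2A(r_n)+q\bigl(r_n^2A(r_n)\bigr)^{1/2}\bigl(r_n^{\gamma}B(r_n)\bigr)^{1/2}\to0$. Hence $r_n^{1+\gamma/2}\sup_\theta|u(r_n,\theta)|^q\to0$.

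Finally, written in nondivergence form the equation \eqref{ell} reads $-\sum a_{ij}\partial_i\partial_j u-\sum(\partial_i a_{ij})\partial_j u+\mathbf b\cdot\nabla u+cu=0$, a uniformly elliptic equation whose coefficients are continuous, hence bounded on each bounded annulus, and with $c\ge0$; therefore the weak maximum principle applies on $\{r_n\le|x|\le r_{n+1}\}$ and gives $\sup_{|x|=r}|u|\le\max\{\sup_{|x|=r_n}|u|,\sup_{|x|=r_{n+1}}|u|\}$ for $r\in[r_n,r_{n+1}]$. By construction $r_{n+1}\le 4r_n$, so for such $r$, $r^{1+\gamma/2}\le r_{n+1}^{1+\gamma/2}\le 4^{1+\gamma/2}r_n^{1+\gamma/2}$, and combining with the previous step shows $\sup_{r_n\le r\le r_{n+1}}r^{1+\gamma/2}\sup_\theta|u(r,\theta)|^q\to0$ as $n\to\infty$. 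Since $r_n\le r_{n+1}$, $r_n\to\infty$ and $\bigcup_n[r_n,r_{n+1}]=[r_1,\infty)$, this proves $\lim_{r\to\infty}r^{1+\gamma/2}\sup_{\theta\in[0,2\pi]}|u(r,\theta)|^q=0$. I expect the only real difficulty to be the exponent bookkeeping: one must see that the weight $r^{\gamma}$ on the angular derivative in Lemma \ref{lem_1}, the weight $r$ coming from $u\in L^q(\Omega)$, and the target weight $r^{1+\gamma/2}$ fit together exactly (through $1+\tfrac{\gamma}{2}=\tfrac12\cdot2+\tfrac12\gamma$ and $\gamma\le2$), so that a single choice of the sequence $\{r_n\}$ forces every term to vanish; the maximum-principle step is then routine once its hypotheses on bounded annuli are checked.
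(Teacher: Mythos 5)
Your proof is correct and follows essentially the same route as the paper: the weighted energy bound of Lemma \ref{lem_1} plus $u\in L^q$, a dyadic selection of good radii $r_n\in[2^n,2^{n+1}]$, the fundamental theorem of calculus in $\theta$ (with Cauchy--Schwarz) to get $r_n^{1+\gamma/2}\sup_\theta|u(r_n,\theta)|^q\to0$, and the maximum principle on the annuli $r_n\le r\le r_{n+1}$ with $r_{n+1}\le 4r_n$. The only differences are cosmetic: the paper picks $r_n$ by the integral mean value theorem applied to a single combined quantity $A_n$ rather than by minimizing $h$ and splitting into $A(r)$, $B(r)$.
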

\begin{proof}
For each sufficiently large integer $n$, let us introduce the quantity
\begin{align*}
	A_n = \int_{2^n}^{2^{n+1}} \frac{dr}{r} \int_0^{2\pi}
		|u|^{q-2} \left( r^2 |u|^2 + r^{1+ \frac{\gamma}{2}} |u| |\partial_{\theta} u | \right) d\theta.
\end{align*}
Since $|\partial_{\theta}u|\le r|\nabla u|$, 
we have by Lemma \ref{lem_1} and the the Schwarz inequality that 
\begin{equation*}
	A_n \le C \int_{2^n<r< 2^{n+1}} (|u|^q + r^{\gamma} |u|^{q-2} |\nabla u|^2 ) \,dx.   
\end{equation*}
On the other hand,
by the mean value theorem for integration, there exists $r_n \in (2^n, 2^{n+1})$ such that 
\begin{align*}
	A_n &= \log 2 \int_{0}^{2\pi} 
			|u(r_n, \theta)|^{q-2} ( r_{n}^2 |u(r_n, \theta)|^2 + r_n^{1+ \frac{\gamma}{2}} 
			|u(r_n, \theta)| |\partial_{\theta} u(r_n, \theta)| )\,d\theta.  
\end{align*}
Next, we estimate
\begin{align*}
	|u(r_n, \theta)|^q - |u(r_n, \varphi)|^q
	&\le \left| \int_{\varphi}^{\theta} \partial_{\psi} | u(r_n, \psi)|^q \,d\psi \right| \\
		&\le \int_0^{2\pi} q |u(r_n, \psi)|^{q-1} |\partial_{\theta} u(r_n, \psi) | \,d\psi.
\end{align*}
Integrating the above for $\varphi \in [0,2\pi]$, we infer
\begin{align*}
	| u(r_n, \theta)|^q
		&\le C \int_0^{2\pi} |u(r_n, \varphi)|^q \,d\varphi
			+ C \int_0^{2\pi} q |u(r_n, \psi)|^{q-1} |\partial_{\theta} u(r_n, \psi) | \,d\psi.
\end{align*}
Multiplying both sides of this estimate by $r_n^{1 + \frac{\gamma}{2}}$ 
and then noting $1 + \frac{\gamma}{2}\le 2$, we have that 
\begin{align*}
	r_n^{1 + \frac{\gamma}{2}} |u(r_n, \theta)|^q
		&\le C r_n^{1 + \frac{\gamma}{2}} \int_0^{2\pi} |u(r_n, \varphi)|^q \,d\varphi \\
		&\quad + C r_n^{1 + \frac{\gamma}{2}} \int_0^{2\pi} q |u(r_n, \psi)|^{q-1} |\partial_{\theta} u(r_n, \psi) | \,d\psi \\
		&\le C A_n,
\end{align*}
Consequently, we obtain
\begin{align*}
	r_n^{1 + \frac{\gamma}{2}} |u(r_n, \theta)|^q
	\le \int_{r>2^n} (|u|^q + r^{\gamma} |u|^{q-2}|\nabla u|^2)  \,dx. 
\end{align*}
Since the right-hand side of the above inequality tends to zero as $n \to \infty$,
implied by Lemma \ref{lem_1}, we have that 
\begin{equation}\label{eqn:2.1}
	\lim_{n\to \infty} r_n^{1 + \frac{\gamma}{2}} \sup_{\theta\in [0,2\pi]} |u(r_n, \theta)|^q = 0.
\end{equation}
Finally, since the solution $u$ of \eqref{ell} satisfies the maximum principle 
and since $r_{n+1} \le 4 r_n$,
we estimate for $r \in (r_n, r_{n+1})$ that 
\begin{align*}
	& r^{1 + \frac{\gamma}{2}} \sup_{\theta\in [0,2\pi]} |u(r, \theta)|^q \\
	 	&\le  r_{n+1}^{1 + \frac{\gamma}{2}}
			\max\{ \sup_{\theta\in [0,2\pi]} |u(r_{n}, \theta)|^q, \sup_{\theta\in [0,2\pi]} |u(r_{n+1}, \theta)|^q \} \\
		&\le \max \{ 16 r_n^{1 + \frac{\gamma}{2}} \sup_{\theta\in [0,2\pi]} |u(r_{n}, \theta)|^q,
					r_{n+1}^{1 + \frac{\gamma}{2}} \sup_{\theta\in [0,2\pi]} |u(r_{n+1}, \theta)|^q \},
\end{align*}
which yields with the aid of (\ref{eqn:2.1}) that 
\begin{align*}
	\lim_{r\to \infty}r^{1 + \frac{\gamma}{2}} \sup_{\theta\in [0,2\pi]} |u(r, \theta)|^q =0.
\end{align*}
This completes the proof of Lemma \ref{lem_asym}, and whence Theorem \ref{thm_en_est}.  
\end{proof}

\begin{proof}[Proof of Corollary \ref{cor_liouville}]
By the assumption (e-i), the equation \eqref{ell} has the maximum principle.
Combining this with the asymptotic behavior from Theorem \ref{thm_asym},
we have $u\equiv 0$.
\end{proof}

\section{Proof of Theorem \ref{thm_en_est}}
Let $h(u) = |u|^q$.
We take a nonnegative function
$\psi \in C_0^{\infty}(\mathbb{R}^n)$
such that
\begin{align*}%
	\psi(x) = \begin{cases}
		1 &(|x| \le 1),\\
		0 &(|x| \ge 2),
		\end{cases}
\end{align*}%
and with the parameter $R>0$ we define
\begin{align*}%
	\psi_R (x) = \psi \left( \frac{x}{R} \right).
\end{align*}%
Similarly to the previous section, by a direct computation we have
\begin{align*}
	& - \sum_{i=1}^n \partial_i
		\left[ \psi_R \sum_{i=j}^n a_{ij} \partial_j (h(u))
			- \sum_{j=1}^n a_{ij} (\partial_j \psi_R) h(u) - \psi_R h(u) b_i \right] \\
	&= - \psi_R h''(u) \left( \sum_{i,j=1}^n a_{ij} \partial_i u \partial_j u \right) \\
	&\quad
		+ h(u)\left[\sum_{i,j = 1}^n \partial_i (a_{ij} \partial_j \psi_R)
		+ \mathbf{b} \cdot \nabla \psi_R + \psi_R \diver \mathbf{b}\right] \\
	&\quad - \psi_R h'(u)
		\left[ \sum_{i,j=1}^n \partial_i (a_{ij} \partial_j u ) - \mathbf{b} \cdot \nabla u \right]
\end{align*}
Using the equation \eqref{para} and the identity $h'(u) \partial_t u = \partial_t ( h(u) )$,
we integrate the above identity over $\mathbb{R}^n$ to obtain
\begin{align*}%
	&\frac{d}{dt} \int_{\mathbb{R}^n} \psi_R h(u) \,dx
		+ \int_{\mathbb{R}^n} \psi_R h''(u) \left( \sum_{i,j=1}^2 a_{ij} \partial_i u \partial_j u \right)\,dx \\
	&= \int_{\mathbb{R}^n} h(u)
		\left[ \sum_{i,j=1}^n \partial_i \left( a_{ij} \partial_j \psi_R \right) + \mathbf{b} \cdot \nabla \psi_R \right] \,dx 
		+ \int_{\mathbb{R}^n} \psi_R \left( h(u) \diver \mathbf{b}  - ch'(u) u \right) \,dx.
\end{align*}%
Furthermore, since $h'(u) = q |u|^{q-2}u$ and $h''(u) = q(q-1) |u|^{q-2}$, 
we integrate the above identity over $[s,t]$ to obtain 
\begin{align}%
\label{en_est_1}
	&\int_{\mathbb{R}^n} \psi_R h(u(t)) \,dx
		+ q(q-1) \int_{s}^t \int_{\mathbb{R}^n} 
		\psi_R |u|^{q-2} \sum_{i, j=1}^na_{ij}\partial_iu\partial_ju  \,dx d\tau \\
\notag
	&\quad
		+ \int_{s}^t \int_{\mathbb{R}^n} \psi_R (-\diver \mathbf{b} + q c) |u|^q \,dx d\tau \\
\notag
	&=
		\int_{\mathbb{R}^n} \psi_R h(u(s)) \,dx \\
\notag
	&\quad
		+ \int_{s}^t \int_{\mathbb{R}^n} h(u)
			\left[ \sum_{i,j=1}^n \partial_j \left( a_{ij} \partial_i \psi_R \right)
				+ \mathbf{b} \cdot \nabla \psi_R \right] \,dxd\tau.
\end{align}%
Let us estimate the right-hand side.
Using the assumptions (p-i) and (p-ii), and then applying the Lebesgue dominated convergence theorem,
we have that
\begin{align*}%
	&\int_{s}^t \int_{\mathbb{R}^n} h(u)
		\sum_{i,j=1}^n \partial_j \left( a_{ij} \partial_i \psi_R \right) \,dxd\tau \\
	&\le C R^{-2} \int_{s}^t \int_{B_{2R}\setminus B_R} |u|^q |x|^2 \,dxd\tau
		+ CR^{-1} \int_{s}^t \int_{B_{2R}\setminus B_R} |u|^q |x| \,dxd\tau \\
	&\le C \int_{s}^t \| u (\cdot, \tau) \|_{L^q(B_{2R}\setminus B_R)}^q \,d\tau \\
	&\to 0 \quad (R\to \infty)
\end{align*}%
and
\begin{align*}%
	\int_{s}^t \int_{\mathbb{R}^n} h(u) \mathbf{b} \cdot \nabla \psi_R \,dxd\tau
	&\le CR^{-1} \int_{s}^t \int_{B_{2R}\setminus B_R} |u|^q |x| \,dxd\tau \\
	&\le C \int_{s}^t \| u (\cdot, \tau) \|_{L^q(B_{2R}\setminus B_R)}^q \,d\tau \\
	&\to 0 \quad (R\to \infty).
\end{align*}%
Consequently, letting $R\to \infty$ in \eqref{en_est_1}, we obtain
\begin{align*}%
	&\int_{\mathbb{R}^n} |u(x,t)|^q \,dx
		+ q(q-1) \int_{s}^t \int_{\mathbb{R}^n} |u(x,\tau)|^{q-2} 
		\sum_{i, j =1}^na_{ij}(x, \tau)\partial_iu(x, \tau)\partial_ju(x, \tau)
		\,dx d\tau \\
\notag
	&\quad
		+ \int_{s}^t \int_{\mathbb{R}^n} (- \diver \mathbf{b}(x,\tau) + qc(x,\tau)) |u(x,\tau)|^q \,dx d\tau \\
\notag
	&=
		\int_{\mathbb{R}^n} |u(x,s)|^q \,dx.
\end{align*}%
This completes the proof of Theorem \ref {thm_en_est}.

\begin{proof}[Proof of Corollary \ref{cor_liouville_pr}]
(i) Applying Theorem \ref{thm_en_est} with $s = 0$ and using $u(x,0) = 0$,
we have by the assumption (p-i) that 
\begin{align*}%
	&\int_{\mathbb{R}^n} |u(x,t)|^q \,dx
		+ q(q-1)\lambda \int_{0}^t \int_{\mathbb{R}^n} |u(x,\tau)|^{q-2} |\nabla u(x,\tau)|^2 \,dx d\tau \\
\notag
	&\quad
		+ \int_{0}^t \int_{\mathbb{R}^n} (- \diver \mathbf{b}(x,\tau) + qc(x,\tau))  |u(x,\tau)|^q \,dx d\tau \\
\notag
	&\le 0
\end{align*}%
for all $t \in [0,T)$.
Noting
$- \diver \mathbf{b} + qc \ge 0$
by the assumption (p-iii),
we conclude $u(x,t) = 0$ for $(x,t) \in \mathbb{R}^n \times [0, T)$.
\par
(ii)
Let $I = (-\infty, 0)$. 
Similarly to the above (i), applying Theorem \ref{thm_en_est} for $s < t < 0$,
we have by the assumption (p-iii) that 
\begin{align}%
\label{est_ulq}
	\| u(t) \|_{L^q} \le \| u(s) \|_{L^q}.
\end{align}%
Since
$u \in L^q(\mathbb{R}^n \times (-\infty, 0))$,
there exists a sequence $\{ s_n \}_{n=1}^{\infty} \subset (-\infty, 0)$
such that
$\lim_{n\to \infty} s_n = - \infty$
and
$\lim_{n\to \infty} \| u(s_n) \|_{L^q} = 0$.
Therefore, taking $s = s_n$ in \eqref{est_ulq} and then letting $n \to \infty$,
we have
$\| u(t) \|_{L^q} = 0$. 
Since $t \in (-\infty, 0)$ is arbitrary,  
we conclude that $u \equiv 0$ on $\mathbb{R}^n \times (-\infty, 0)$. 
This proves Corollary  \ref{cor_liouville_pr}.  
\end{proof}

\section*{Acknowledgement}
This work was supported by JSPS Grant-in-Aid for Scientific Research(S) Grant Number JP16H06339.

\end{document}